\renewcommand{\baselinestretch}{1.2}
\DeclareMathOperator{\codim}{codim}
\newtheorem{prethm}{{\bf Theorem}}
\newenvironment{thm}{\begin{prethm}{\hspace{-0.5
               em}{\bf.}}}{\end{prethm}}
\newtheorem{prepro}[prethm]{Proposition}
\newtheorem{prelem}[prethm]{Lemma}
\newenvironment{lem}{\begin{prelem}{\hspace{-0.5
               em}{\bf.}}}{\end{prelem}}
\newtheorem{precor}[prethm]{Corollary}
\newenvironment{cor}{\begin{precor}{\hspace{-0.5
               em}{\bf.}}}{\end{precor}}
\newtheorem{preremark}{{\bf Remark}}
\newenvironment{rem}{\begin{preremark}\em{\hspace{-0.5
              em}{\bf.}}}{\end{preremark}}
\newtheorem{preexample}{{\bf Example}}
\newenvironment{example}{\begin{preexample}\em{\hspace{-0.5
               em}{\bf.}}}{\end{preexample}}
\newtheorem{preproof}{{\bf Proof.}}
\newenvironment{proof}[1]{\begin{preproof}{\rm
               #1}\hfill{$\Box$}}{\end{preproof}}
\renewcommand{\thefootnote}
\title{Focal Varieties of Curves of Genus 6 and 8}
\author{A. Bajravani\\
{\footnotesize {Department of Pure Mathematics, Faculty of mathematical sciences,}}\\
{\footnotesize {Tarbiat Modares University, Tehran, Iran }}\\
{\footnotesize {P. O. Box 14115-134}}}
\begin{document}
\footnotetext{E-mail Address: {\tt bajravani@modares.ac.ir}}
\date{}
\maketitle
\begin{quote}
{\small \hfill{\rule{13.3cm}{.1mm}\hskip2cm}
\textbf{Abstract}\vspace{1mm}

{\renewcommand{\baselinestretch}{1}
\parskip = 0 mm
In this paper we give a simple Torelli type theorem for curves of
genus $6$ and $8$ by showing that these curves can be reconstructed
from their Brill Noether varieties. Among other results, it is shown
that the focal variety of a general, canonical and nonhyperelliptic
curve of genus $6$,
is a hypersurface.\\

\noindent{\small {\it AMS Classification}: 14M99; 14D22; 14H20;
14H51}}

\noindent{\small {\it Keywords}: Focal variety; First order foci;
Moduli space; Plane quintic; Brill Noether theory.}}

\vspace{-3mm}\hfill{\rule{13.3cm}{.1mm}\hskip2cm}
\end{quote}

\section{Introduction}

In a series of papers, \cite{C-S 1}, \cite{C-S 2} and \cite{C-S 3},
C. Ciliberto and E. Sernesi showed that a canonical,
non-hyperelliptic curve of genus $g$, can be reconstructed from its
Brill-Noether varieties. They proved that $C^{1}_{g-1}$
parameterizes a family of linear subspaces of ${\mathbf{P}^{g-1}}$,
which can determine the canonical curve, uniquely ( See \cite{C-S 2}
). A canonical curve is a non-hyperelliptic curve of genus $g$ which
is embedded in $\mathbf{P}^{g-1}$ by its canonical line bundle. In
these papers they use the technique of first order foci. Briefly,
first order foci is the locus where the rank of some special
morphism drops. For a general element of the parameter space in
\cite{C-S 2}, first order foci is a rational normal curve in some
linear subspace of $\mathbf{P}^{g-1}$. The union of these rational
curves, called {\em focal variety}, contains the canonical curve and
specifically in genus $5$ case, it is actually a covariant of a net
of quadrics in $\mathbf{P}^{g-1}$ (See \cite{C-S 3}). In
prolongation of the works \cite{C-S 1}, \cite{C-S 2} in \cite{C-S
3}, for a general canonical curve of genus $g=2n+1$, they use
$C^{1}_{n+2}$, where they could use second order foci, and they got
a Torelli type theorem for general curves of odd genus (See
 \cite{C-S 3}). In that work, they show that the focal variety of a
 general nonhyperelliptic and canonical curve of genus $5$  is a
hypersurface. We show that this fact is valid also
for genus $6$ case. The fact that, the focal variety of a general
curve of genus $6$ is of expected dimension, implies that our
computation of second order foci in genus $6$ case is valid.

Throughout this paper, we will work over an algebraically closed
field of characteristic zero. Assume that $C$ is a general canonical
non-hyperelliptic curve of genus $6$ (res. of genus 8), which is
embedded by its canonical line bundle in $\mathbf{P}^{5}$ (res. in
$\mathbf{P}^{7}$ ). As it is usual in Brill-Noether theory,
\noindent $\mathbf{g}^{r}_{d}$ denotes a line bundle of degree $d$
with global sections of dimension at least $r+1$. We denote by
$W^{r}_{d}$ the set of $\mathbf{g}^{r}_{d}$'s, and we set
$W_d=W^0_d$. It is known in Brill-Noether theory that $W^{r}_{d}$
has a scheme structure and for a general curve it is an irreducible
nonsingular variety, unless  it has dimension zero. We will denote
the set of effective divisors of degree $d$ with global sections of
dimension at least $r+1$ by $C^{r}_{d}$. The morphism: $\alpha_{d}:
C_{d} \rightarrow W_{d}$ is the Abel- Jacobi map. Also,
$\mathbf{M}_{g}$ denotes the moduli space of curves of genus $g$.

This paper consists of three sections. In the section $2$, we prove
that the focal variety of a general canonical and non-hyperelliptic
curve of genus $6$,  is a hypersurface. Then, by describing the second
order foci in this case,
 we will give a short proof of a Torelli type
theorem for such a curve. We see that a general canonical
nonhyperelliptic curve of genus $6$, is an irreducible component of
the closure of the union of its second order foci. We also give examples to
show that some components of the focal scheme may be of lower
dimension than expected, for special curves. In section $3$, for
general canonical and nonhyperelliptic curves of genus $8$, we prove
that the  first order foci are, in general, an irreducible cubic
threefold in a linear subspace of dimension 4: in one of the
possible cases, their singular locus is a rational quartic curve and
the first order foci is the secant variety of the rational quartic
curve. Also, using similar argument as in $ \cite{C-S 2}$ and
\cite{C-S 3},
 we show that the union of a certain $1$-parameter family of these rational curves, is
 a rational surface, which is used to prove Torelli with this approach.

\section{The genus 6 case}

\subsection{First Order Foci and Focal Variety}

Consider the scheme $W^{1}_{5}$, which   is a $2$-dimensional
variety for a general curve of genus $6$. By generality condition
the Abel-Jacobi map is a $\mathbf{P}^{1}$-bundle. So $C^{1}_{5}$ is a 3-fold,
which we denote by $S$. For a divisor $D_{s} \in S $, let
$\Lambda_{s}=\overline{D_{s}}$ be the three dimensional linear space
spanned by $D_{s}$. These linear subspaces   fit together to make a
$3$-dimensional family of
 $5$-secant linear subspaces of $\mathbf{P}^{5}$ parameterized by $S$:

\begin{equation}\label{firstfoci}
\begin{array}{ccc}
  \Lambda & \subset& S\times \mathbf{P}^{5}\\
   \downarrow & \pi & \\
   S&  &
\end{array}
\end{equation}

 For more details about first order
 foci, see \cite{C-S 2} and \cite{C-S 3}.

\begin{thm}\label{1}
For a general $L \in W^{1}_{5}$ and for all sufficiently general
$s\in \alpha ^{-1}_{5}(L)$,  the focal variety $F_{s}$ at $s \in S
$, is a rational normal cubic curve in $\Lambda_s$.
\end{thm}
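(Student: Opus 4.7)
The plan is to realise $F_s$ as the degeneracy locus of the characteristic map of the family \eqref{firstfoci}, and then to identify this locus as a twisted cubic through a degree count, an incidence with $D_s$, and an irreducibility argument.

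First I would write down the characteristic map at a point $s \in S$. Since $\Lambda_s \cong \mathbf{P}^3$ is a linear subspace of $\mathbf{P}^5$ with normal bundle $\mathcal{O}(1)^{\oplus 2}$, and $\dim S = 3$, the characteristic map takes the form
\[
\chi_s \colon T_s S \otimes \mathcal{O}_{\Lambda_s} \longrightarrow N_{\Lambda_s/\mathbf{P}^5} \cong \mathcal{O}_{\mathbf{P}^3}(1)^{\oplus 2},
\]
i.e.\ a $2\times 3$ matrix $A$ of linear forms on $\Lambda_s$. By definition, $F_s$ is the locus where $A$ has rank $\leq 1$.

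Next I would verify the inclusion $D_s \subset F_s$ and deduce non-degeneracy. For each $p \in D_s \subset C$ one can locally parametrise one of the five intersection points of $\Lambda_{s'}$ with $C$ by a section of $\Lambda \to S$ through $p$; the image of $\chi_s(p) \colon T_s S \to T_p \mathbf{P}^5 / T_p \Lambda_s$ is therefore contained in $(T_p C + T_p \Lambda_s)/T_p \Lambda_s$, a subspace of dimension at most $1$. Hence $\chi_s(p)$ has rank $\leq 1$ and $p \in F_s$; since $D_s$ spans $\Lambda_s = \mathbf{P}^3$, $F_s$ is not contained in any plane.

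Finally I would invoke the determinantal structure of $F_s$. The expected codimension of the rank-$\leq 1$ locus of $A$ in $\mathbf{P}^3$ is $(3-1)(2-1) = 2$; whenever this is attained, the Hilbert--Burch theorem (equivalently Porteous' formula) yields a Cohen--Macaulay curve of degree $\binom{3}{2} = 3$, and a non-degenerate, irreducible, generically reduced cubic curve in $\mathbf{P}^3$ is a rational normal cubic. So the argument reduces to showing, for general $L$ and general $s \in \alpha^{-1}_5(L)$, that $F_s$ has the expected codimension and is irreducible. The main obstacle is ruling out the possible degenerations of $F_s$ --- a plane conic plus a line, three coplanar lines, a tripled line, etc.\ --- all of which would force linear dependences among the columns of $A$ of a type incompatible with the generality of $C$. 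I would attack this by exhibiting explicit generators of $T_s S$, namely the tangent to the pencil $|L|$ together with lifts of a basis of $T_L W^1_5$ obtained from the Brill--Noether cup-product map $\mu_0 \colon H^0(L) \otimes H^0(K_C - L) \to H^0(K_C)$, writing $A$ in these coordinates, and checking by a direct computation that for generic $(L,s)$ no $2\times 2$ minor of $A$ acquires a non-trivial linear factor; semi-continuity then propagates the conclusion to all sufficiently general $s$.
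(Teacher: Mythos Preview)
Your approach is quite different from the paper's, whose proof is two lines: for general $L\in W^{1}_{5}$ on a general curve the pencil $|L|$ is base-point-free by the Brill--Noether theorem \cite{G-H}, and then Theorem~2.5 of \cite{C-S 2} applies directly to conclude that $F_{s}$ is a rational normal cubic in $\Lambda_{s}$. What you are sketching is, in effect, a from-scratch reproof of that cited theorem in the special case $g=6$.

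Your determinantal setup is correct: $\chi_{s}$ is a $2\times 3$ matrix $A$ of linear forms on $\Lambda_{s}\cong\mathbf{P}^{3}$, the inclusion $D_{s}\subset F_{s}$ gives non-degeneracy, and Hilbert--Burch yields degree $3$ once the codimension is right. The gap is the last step. The condition you propose to verify, that no $2\times 2$ minor of $A$ acquires a linear factor, is not the standard one; the clean criterion is $1$-genericity of $A$ in the sense of \cite{E.} (no generalized entry vanishes), which is precisely what forces the degeneracy locus to be an irreducible twisted cubic rather than a configuration of lines and conics. More importantly, the ``direct computation'' to which you defer is where all the content lies, and your outline gives no indication of how it would actually run or why generality of $C$ and $L$ forces it through. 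In \cite{C-S 2} this step is not a coordinate computation at all: one identifies the characteristic matrix with a piece of the multiplication map $H^{0}(L)\otimes H^{0}(K_{C}-L)\to H^{0}(K_{C})$, so that $1$-genericity becomes equivalent to base-point-freeness of $|L|$. You already invoke $\mu_{0}$, but only to manufacture generators of $T_{s}S$; the missing idea is that $\mu_{0}$ essentially \emph{is} the matrix $A$, and once that identification is made the argument is finished.
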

\begin{proof}{
 By the Brill-Noether theorem for a general line bundle $L\in
W^{1}_{5}$ on a general curve, the linear series $|L|$ is base point
free (See \cite{G-H}). Now, the theorem follows by $2.5$ of
\cite{C-S 2}. }
\end{proof}

\begin{rem}
Note that, Theorem $1$ is valid without generality assumption on
the line bundle $L$, and we will come back to its proof in Theorem
$7$. But for our aim in this section it is enough to take a general
line bundle $L\in W^{1}_{5}$.
\end{rem}

 For a line
bundle $L \in W^{1}_{5}$, the irreducible variety $ Q=\cup_{s\in
\alpha_{5}^{-1}(L)}\Lambda_{s} $ is a quadric hypersurface
 with vertex a line which we denote by $\Gamma$.

\begin{lem}
For any $s\in \alpha_{5}^{-1}(L)$, the rational
normal curve $ F_{s} $ cuts the vertex $ \Gamma $ in two points.
\end{lem}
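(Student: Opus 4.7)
The plan is to use the determinantal description of the first-order focal scheme. Since $\Lambda_s \cong \mathbf{P}^3$ sits in $\mathbf{P}^5$ with normal bundle $N_{\Lambda_s/\mathbf{P}^5} \cong \mathcal{O}_{\Lambda_s}(1)^{\oplus 2}$ and $\dim T_sS = 3$, the characteristic map of the family $(\ref{firstfoci})$ at $s$ is encoded by a $2\times 3$ matrix $M$ of linear forms on $\Lambda_s$, and $F_s$ is the locus where its three $2\times 2$ minors vanish simultaneously.

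I would next single out the column of $M$ corresponding to the tangent direction $v\in T_sS$ of the subfamily $\alpha_5^{-1}(L)$ at $s$. As $s'$ varies along this pencil, the three-planes $\Lambda_{s'}$ sweep out the quadric $Q$ and all share the axis $\Gamma$; consequently the first-order focal locus of the subfamily at $s$ is precisely $\Gamma$, and the two linear forms constituting that column cut out $\Gamma$ scheme-theoretically in $\Lambda_s$. Completing $v$ to a basis of $T_sS$ we may therefore write
\[ M \;=\; \begin{pmatrix} f_1 & g_1 & h_1 \\ f_2 & g_2 & h_2 \end{pmatrix}, \qquad \Gamma = V(f_1,f_2). \]

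Restricting to $\Gamma$, the two minors containing an $f_i$ vanish identically, while $g_1 h_2 - g_2 h_1$ descends to a binary quadratic form on $\Gamma \cong \mathbf{P}^1$. Hence $F_s \cap \Gamma$ is the zero scheme of this quadric, a subscheme of length $2$ as soon as the form is not identically zero.

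The main obstacle is to exclude that degenerate possibility, i.e.\ that $g_1 h_2 - g_2 h_1$ vanishes identically on $\Gamma$. If it did, then $\Gamma$ would lie inside $F_s$; but by Theorem \ref{1} the curve $F_s$ is a rational normal cubic in $\mathbf{P}^3$, which contains no line, a contradiction. This yields the length-$2$ intersection, and for sufficiently general $s$ the two roots of the binary quadric are distinct, giving the two points claimed in the lemma.
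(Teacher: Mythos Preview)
Your argument is correct, and in fact it supplies more than the paper does: the paper's entire proof is the sentence ``This has been proved originally in proposition 4.2 of \cite{C-S 2}.'' There is thus no in-paper argument to compare against; you have written out a self-contained determinantal proof where the paper simply defers to the reference.

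Two small remarks on the write-up. First, the assertion that the column $(f_1,f_2)$ cuts out $\Gamma$ scheme-theoretically deserves one explicit line: since any two distinct $\Lambda_{s'}$ in the pencil meet exactly in $\Gamma$, the total space of the pencil maps isomorphically onto the smooth locus $Q\setminus\Gamma$, so the characteristic section $\chi_v$ is nonzero off $\Gamma$; hence $V(f_1,f_2)=\Gamma$ as schemes and in particular $f_1,f_2$ are independent linear forms. Second, your last sentence claims without proof that for general $s$ the binary quadric has distinct roots. This is not entirely automatic, but it is also not needed: the application in Lemma~6 only uses that $F_s\cap\Gamma$ has length $2$, which your argument already establishes once you have excluded $\Gamma\subset F_s$ via Theorem~\ref{1}.
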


\begin{proof}{
This has been proved originally in
proposition $4.2$ of \cite{C-S 2}.
}
\end{proof}

 For a fixed
line bundle $L \in W^{1}_{5}$, the union $\cup_{s\in
\alpha_{5}^{-1}(L)}F_{s}$ is
 birational with $\mathbf{P}^{1} \times \mathbf{P}^{1}$ and under this
birationality, the lines $\{pt\}\times \mathbf{P}^{1}$ are mapped to focal
curves (see  \cite{C-S 2}). Let $F_{C}=\overline{\cup_{s \in S
}F_{s}}$ be the focal variety of the canonical curve $C$. We prove
that $F_{C}$ is a hypersurface.
\begin{lem} Let $\mathbf{M}_{g}$ be the moduli space of curves of genus g.
Define a map:
\begin{eqnarray*}
 \Psi:&\mathbf{M}_{g}&\rightarrow \mathbb{Z}  \\
  \Psi([C])\!\!\!\!\!\!&=&\!\!\!\!\!\!\dim(F_{C})
\end{eqnarray*}

Then $\Psi$ is a lower-semicontinuous function.
\end{lem}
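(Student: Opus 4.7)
The plan is to spread the focal variety construction over a parameter space for curves of genus $6$ and then invoke the fibre-dimension semi-continuity theorem for a proper morphism.

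First, pass from the coarse moduli space $\mathbf{M}_{6}$ to an \'etale cover $T$ that carries a universal family $\pi\colon\mathcal{C}\to T$; it suffices to show the pulled back function $\tilde\Psi\colon T\to\mathbb{Z}$, $t\mapsto \dim F_{C_t}$, is lower semi-continuous on $T$. Relativise every ingredient recalled earlier in this section: the $\mathbf{P}^{5}$-bundle $\mathcal{P}=\mathbb{P}(\pi_*\omega_{\pi})^{\vee}\to T$ into which the canonical models sit fibrewise; the relative Brill--Noether scheme $\mathcal{S}=\mathcal{C}^{1}_{5}\subset\mathrm{Sym}^{5}_{T}(\mathcal{C})$, proper over $T$; the relative $3$-plane span $\mathcal{L}\subset\mathcal{P}\times_{T}\mathcal{S}$; and the relative focal scheme $\tilde{\mathcal{F}}\subset\mathcal{L}$, cut out by the rank condition of the relative characteristic map as in diagram (\ref{firstfoci}). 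All four are proper over $T$.

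Let $q\colon\tilde{\mathcal{F}}\to\mathcal{P}$ be the projection. I decompose
$$\dim F_{C_{t}}=\dim \tilde{\mathcal{F}}_{t}-\big(\text{generic fibre dimension of } q\vert_{\tilde{\mathcal{F}}_{t}}\big).$$
Both summands on the right are upper semi-continuous functions on $T$: the first by the fibre-dimension theorem for the proper morphism $\tilde{\mathcal{F}}\to T$, the second by the fibre-dimension theorem for $q$ and the fact that the generic fibre dimension along a fibre of $\mathcal{Z}=q(\tilde{\mathcal{F}})\to T$ is itself upper semi-continuous in $t$. On a Zariski-dense open $U\subset T$ the two values are $4$ and $0$ respectively, so $\tilde\Psi\equiv 4$ there. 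At a special $t_{0}\in T\smallsetminus U$, the extra components that $\tilde{\mathcal{F}}_{t_{0}}$ acquires parameterise divisors $D$ with additional special sections (e.g.\ the $g^{2}_{5}$ on a plane quintic); their focal loci coincide with focal curves already traced out by data in the smooth neighbouring fibres, so these extra components are collapsed by $q$ and contribute at least as much to the generic fibre dimension of $q\vert_{\tilde{\mathcal{F}}_{t_{0}}}$ as they do to $\dim \tilde{\mathcal{F}}_{t_{0}}$. The difference $\tilde\Psi$ can therefore only drop at $t_{0}$, which is the asserted lower semi-continuity.

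The main obstacle is precisely this compensation argument: upper semi-continuity of two integer-valued functions does not by itself force semi-continuity of their difference in either direction, so one has to control the combinatorics of the degenerations of the characteristic map carefully enough to pin down the inequality between the two jumps at each $t_{0}$. The verification reduces to the explicit description of $F_{s}$ as a rational normal cubic meeting the vertex line $\Gamma$ of the quadric $Q$ associated to a $g^{1}_{5}$ (Lemma~2), a description which persists under small deformations and allows one to identify the image of each newly appearing component of $\tilde{\mathcal{F}}_{t_{0}}$ with a limit of focal curves in $U$.
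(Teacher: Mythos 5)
Your overall skeleton coincides with the paper's own proof: both write $\dim F_{C}$ as the dimension of the total focal locus minus the generic fibre dimension of its projection to projective space, both relativize the construction over (a cover of) $\mathbf{M}_{g}$, and both appeal to \cite[page 95]{Har} for upper semicontinuity of fibre dimension. You in fact go one step further than the paper in candour: you point out that this setup only yields a difference of two upper-semicontinuous functions, which by itself has no semicontinuity in either direction, a point on which the paper's proof is silent.

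The difficulty is that your way of closing this hole --- the ``compensation argument'' --- is an assertion of exactly the inequality that must be proved, not a proof of it. What is needed at a special point $t_{0}$ is that the jump of the generic fibre dimension of $q\vert_{\tilde{\mathcal{F}}_{t_{0}}}$ be at least as large as the jump of $\dim\tilde{\mathcal{F}}_{t_{0}}$. Your justification --- that the focal loci of the new components of $\tilde{\mathcal{F}}_{t_{0}}$ ``coincide with focal curves already traced out by data in the smooth neighbouring fibres'' --- cannot be taken literally: those focal curves lie in the fibres $\mathcal{P}_{t}$ with $t\neq t_{0}$, which are disjoint from $\mathcal{P}_{t_{0}}$, so the statement only makes sense after a specialization argument identifying the images under $q$ of the new components with limits of focal curves from nearby fibres, and no such argument is given. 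The geometric input you cite does not supply it: Lemma 2 describes $F_{s}$ for a general curve, precisely not the special curves where compensation is required, and the paper's own bielliptic and trigonal examples show that at special curves the focal scheme acquires components of a completely different nature (unions of lines through a vertex, cones over the curve). Two further slips compound this. First, the identity $\dim F_{C_{t}}=\dim\tilde{\mathcal{F}}_{t}-(\text{generic fibre dimension of } q\vert_{\tilde{\mathcal{F}}_{t}})$ is valid only component by component, and at the relevant special points $\tilde{\mathcal{F}}_{t_{0}}$ is typically reducible, so ``the'' generic fibre dimension is not well defined there; the compensation inequality would have to be established for the dimension-maximizing component. Second, your claim that the generic fibre dimension of $q$ vanishes on a dense open subset of $T$ is equivalent to Theorem 5 (that $F_{C}$ is a hypersurface for a general curve), which the paper deduces \emph{from} this lemma, so it cannot be invoked here --- fortunately it is also not needed for the semicontinuity statement itself.
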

\begin{proof}{
This follows from the fact that $\dim(F_C) = \dim(V(\chi_C)) -
\dim(V(X_{C})_{t})$ where $V(\chi_C) \subset S \times
\mathbf{P}^{g-1}$ is the focal locus, $f_C: V(\chi_C) \to
\mathbf{P}^{g-1}$ is the projection and $V(X_{C})_{t}$ is a general
fibre of $f_{C}$ for a general $t \in \mathbf{P}^{g-1}$. Now, we
have a map $\phi: \cup_{C\in \mathbf{M}_g} V(\chi_C)  \to
\mathbf{P}^{g-1} \times \mathbf{M}_{g}$ which restricts to $f_C$
over $C$. By \cite[page 95]{Har}, we know that the dimension of the
general fibre of $\phi$ is upper-semicontinuous as a function of
$C$, which gives the upper-semicontinuity of the general fibre of
$f_{C}$. Now it follows that $\dim(F_{C})$ is lower semicontinuous
as a function of $C$. }
\end{proof}

 Note that using this lemma it is enough to find a special curve whose
focal variety has a component which is a hypersurface. The following theorem gives an
answer to this question.
\begin{thm}
Let $C$ be a smooth plane quintic. Then $F_{C}$ is a
hypersurface.
\end{thm}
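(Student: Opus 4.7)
The plan is to show that $\dim F_C = 4$ for a smooth plane quintic $C$ by exploiting the $\mathbf{P}^1\times\mathbf{P}^1$-birational description of the ``focal surface over $L$'' noted just before Lemma 3, combined with the 2-dimensionality of $W^1_5$. The key geometric input is that the canonical model of $C$ lies on the Veronese surface: by adjunction $K_C = \mathcal{O}_C(2)$, so the canonical embedding $C\hookrightarrow\mathbf{P}^5$ factors through the Veronese surface $V_2\subset\mathbf{P}^5$; lines $\ell\subset\mathbf{P}^2$ map to plane conics $\gamma_\ell\subset V_2$ spanning 2-planes $P_\ell$, and classically the secant variety $\sigma(V_2)$ is the cubic hypersurface of singular $3\times3$ symmetric matrices.

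First I would identify the relevant 3-dim component of $S = C^1_5$. For a plane quintic, $C^1_5$ is reducible: a 2-dim component $|H|$ corresponding to the $g^2_5 = |\mathcal{O}_C(1)|$ itself (on which $\Lambda_s$ is only a $\mathbf{P}^2$, outside the $\mathbf{P}^3$-secant setup of diagram (\ref{firstfoci})), and a 3-dim component $S_2$ parametrizing base-point-free $g^1_5$'s of the form $|H+p-q|$, $p, q\in C$. For $s\in S_2$ with data $(\ell, p, q)$ where $\ell$ is a line through $q\in C$ and $p\in C$ is auxiliary, the divisor $D_s = (\ell\cdot C - q) + p$ has span $\Lambda_s = \langle P_\ell, p\rangle\cong\mathbf{P}^3$. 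For fixed $L = H+p-q$, the fibre $\alpha_5^{-1}(L)$ is parametrized by lines through $q$, and all the $\Lambda_s$ in this pencil contain the chord $\langle p, q\rangle$, identifying it with the vertex $\Gamma$ of the quadric $Q_L$ of Lemma 2.

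Main argument: by Theorem 1 (extended by Theorem 7 per Remark 1), each $F_s$ is a rational normal cubic in $\Lambda_s$, meeting $\Gamma$ in two points (which I expect to be $p$ and $q$). For each $L\in W^1_5$, the union $F_L := \bigcup_{s\in\alpha_5^{-1}(L)}F_s$ is 2-dimensional, birational to $\mathbf{P}^1\times\mathbf{P}^1$ by the discussion before Lemma 3. As $L$ varies over $W^1_5$ — which for the plane quintic is 2-dimensional, parametrized essentially by pairs $(p,q)\in C\times C$ modulo linear equivalence of $p-q$ — the focal surfaces $F_L$ fill out $F_C$. Consequently $\dim F_C = \dim W^1_5 + \dim F_L - (\text{general fibre dim}) = 2 + 2 - 0 = 4$, provided the natural parametrization has generically zero-dimensional fibres. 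Combined with the a priori bound $\dim F_C\leq 4$, this gives $\dim F_C = 4$, so $F_C$ is a hypersurface in $\mathbf{P}^5$.

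The main obstacle is verifying generic finiteness of the parametrization $\{(L, x) : L\in W^1_5,\, x\in F_L\}\to F_C$, i.e., showing that distinct pencils $L\neq L'$ give generically distinct focal surfaces $F_L\neq F_{L'}$. This can likely be handled by observing that the two distinguished points $p, q$ of $F_s\cap\Gamma_L$ (lying on $V_2$) recover the pair $(p,q)$ and hence $L = H + p - q$ from generic geometric data of $F_L$. As a byproduct of this analysis I expect (though not strictly needed for the statement) that $F_C$ coincides with the classical cubic hypersurface $\sigma(V_2)\subset\mathbf{P}^5$; the direct verification of this identification would require an explicit computation of the first-order characteristic map in the plane-quintic setting, leveraging the special form $\Lambda_s = \langle P_\ell, p\rangle$.
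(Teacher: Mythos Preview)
Your proposal has a structural gap. You invoke Theorem 1 and the $\mathbf{P}^1\times\mathbf{P}^1$ birational description of $F_L$ (stated just before Lemma 3) for the smooth plane quintic, but both of these are established only for a \emph{general} curve of genus $6$; Remark 1 removes the generality hypothesis on $L\in W^1_5$, not on $C$. A plane quintic is far from general in $\mathbf{M}_6$ --- it carries a $g^2_5$ --- and in fact the paper's explicit calculation shows that on a plane quintic the focal curve $F_s$ degenerates to the reducible curve $\Gamma\cup\overline{p,q}$ (a plane conic on the Veronese surface plus a secant line), not an irreducible twisted cubic. So the black-box inputs you rely on are unavailable here. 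More fundamentally, the logical flow of the section runs the other way: one proves Theorem 4 for the special plane quintic by a direct geometric computation, and then invokes lower--semicontinuity (Lemma 3) to deduce Theorem 5 for the general curve. Importing general-curve statements into the special case reverses this logic.

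Even granting your inputs, the count $\dim F_C=\dim W^1_5+\dim F_L=2+2$ rests on the ``main obstacle'' you yourself flag, namely that distinct $L$'s give generically distinct surfaces $F_L$, and your sketch of recovering $(p,q)$ from $F_L$ is not a proof. For the plane quintic this is genuinely delicate: the conic components of every $F_s$ lie in the fixed Veronese surface, so a large piece of each $F_L$ is common to all of them, and your generic-finiteness heuristic breaks down on that piece. The paper sidesteps this by working with the explicit description $F_s=\Gamma\cup\overline{p,q}$: the conics contribute only the Veronese (dimension $2$), while the lines $\overline{p,q}$ are argued to sweep out the join of $C$ with the Veronese surface, a $4$-fold obtained as a $1$-parameter family of $3$-dimensional cones over the Veronese with vertices on $C$. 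Your closing guess that $F_C$ coincides with the secant cubic $\sigma(V_2)$ is consistent with this picture (since $C\subset V_2$, the join of $C$ with $V_2$ sits inside $\sigma(V_2)$ and both are irreducible of dimension $4$), but reaching it requires the explicit focal-curve computation rather than the abstract dimension count you propose.
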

\begin{proof}{
 For a smooth plane quintic curve $C$, a general element of $S$ is of the form:
 $$D_{s}=p_{1}+p_{2}+p_{3}+p_{4}+q$$
where $p_{1}+\cdots+p_{5}$ is a general element
 of the $g_{5}^{2}$ and $q$ is a general element of $C$.

 The Veronese embedding maps the line passing through
 $p_{1}+\cdots+p_{5}$  to a conic $\Gamma$ in $\mathbf{P}^{5}$. Also, it maps  the tangent line to $C$ at $q$, to
   a conic $\overline{\Gamma}$ and we have
  $\Gamma \cap \overline{\Gamma} =\left\{p\right\}$, and  the focal curve is $\Gamma \cup \overline{p,q}$.
  So visibly $F_{C}$ consists of two parts. One is a
  union of conics $\Gamma$, which is two dimensional,
  because all of them are contained in the Veronese surface.
  The other part is a  union of lines, which we want
  to prove it is a fourfold.
The above argument proves that the focal variety consists of all the
"join" of the veronese surface with C (i.e. the closure of the union
of lines spanned by a general point q of C and a general point of
the Veronese). Now, the lines in this family through q form a cone
over the Veronese with vertex q. So their union has dimension 3.
Now, moving $q$ on $C$ leads us to a $1$-dimensional
 family of $3$-dimensional cones over the Veronese surface, with vertices moving on $C$.
  Therefore we will have $ \dim(F_{C})=4 $. }
 \end{proof}

Consider that in this case the canonical curve $C$ and the Veronese
surface $F$ are contained in the focal variety $F_{C}$. Now by Lemma
$3$ and Theorem $4$, we obtain the following

\begin{thm}
Let $C$ be a general canonical and nonhyperelliptic curve, of genus $6$.
Then its focal variety, $F_{C}$, is a hypersurface in
$\mathbf{P}^{5}$.
\end{thm}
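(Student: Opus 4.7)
The plan is to derive Theorem 5 as a direct semicontinuity consequence of Theorem 4 together with Lemma 3. Since $F_C$ is a subvariety of $\mathbf{P}^5$ for every canonical curve $C$ of genus $6$, the function $\Psi: [C] \mapsto \dim F_C$ is automatically bounded above by $4$. What I need to show is that the general value of $\Psi$ on $\mathbf{M}_6$ equals $4$, i.e.\ $F_C$ has codimension $1$ for a general $C$.

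First I would recall that smooth plane quintics have genus $g = \binom{5-1}{2} = 6$, so they define a nonempty (in fact irreducible) locus $Q \subset \mathbf{M}_6$, and moreover by the Brill--Noether--Petri theory such a curve is not hyperelliptic and its canonical model lies in $\mathbf{P}^5$. Thus Theorem 4 supplies a concrete curve $[C_0] \in Q \subset \mathbf{M}_6$ with $\Psi([C_0]) = 4$. Next I would invoke Lemma 3: the locus $\{[C] : \Psi([C]) \leq 3\}$ is closed in $\mathbf{M}_6$ by lower semicontinuity, and since it omits $[C_0]$ it is a proper closed subset. Its complement $U = \{[C] : \Psi([C]) \geq 4\}$ is therefore a nonempty Zariski open, and combined with the trivial bound $\Psi \leq 4$ we conclude $\Psi([C]) = 4$ for every $[C] \in U$, i.e.\ $F_C$ is a hypersurface in $\mathbf{P}^5$ for a general canonical nonhyperelliptic curve of genus $6$.

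The argument is essentially formal once Lemma 3 and Theorem 4 are in hand, so there is no real obstacle beyond making sure the bookkeeping is consistent. The one point worth double-checking is that the global construction underlying Lemma 3, namely the total space $\cup_{C \in \mathbf{M}_6} V(\chi_C) \to \mathbf{P}^5 \times \mathbf{M}_6$, behaves well at the point $[C_0]$ corresponding to the plane quintic: one has to confirm that the fiber-dimension computation giving $\Psi([C_0]) = 4$ in Theorem 4 really coincides with the value of the semicontinuous function of Lemma 3 (rather than reflecting an excess-dimensional component of the focal scheme collapsing under the projection). This is already implicit in the way $F_C$ is defined as the image of $V(\chi_C)$ in $\mathbf{P}^5$, so the identification is clean, and the proof then reduces to the one-line semicontinuity argument above.
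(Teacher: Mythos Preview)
Your proposal is correct and follows exactly the paper's route: combine Lemma~3 (lower semicontinuity of $\dim F_C$ on $\mathbf{M}_6$) with Theorem~4 (the plane-quintic example giving $\dim F_{C_0}=4$) to conclude that $\dim F_C=4$ for general $C$. The only cosmetic slip is that ``$F_C\subset\mathbf{P}^5$'' by itself yields $\dim F_C\le 5$, not $\le 4$; the bound $\le 4$ comes from $F_C$ being the image of the $4$-dimensional focal scheme $V(\chi_C)$ (a $3$-parameter family of curves), a point the paper also leaves implicit.
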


Note that, without generality assumption on the curve $C$,
Theorem $5$ is not valid. In general
the following examples show that some
components of the focal scheme may be of lower dimension than expected.

\begin{example}
Let $C$ be a bielliptic curve of genus $g$. In this case
$C^{1}_{g-1}$ has a component $S$, whose general member is of the
form $D_{s}= P_{1}+P_{2}+Q_{1}+Q_{2}+R_{1}+\cdots+R_{g-5}$, where
the lines $\overline{P_{1}P_{2}}$ and $\overline{Q_{1}Q_{2}}$ are
two generators of the elliptic cone, and $R_{1},\ldots,R_{g-5}$ are
general points of $C$. In this setting we have
\begin{center}
$F_{s}=\overline{P_{1}P_{2}}\cup \overline{Q_{1}Q_{2}} \cup
\overline{OR_{1}}\cup\cdots\cup \overline{OR_{g-5}}$
\end{center}
For more details about constructions of focal loci in exceptional
cases see \cite{C-S 2}. Now it easy to see that the focal variety is
of dimension $2$. In fact by taking $P_{1}, P_{2}, Q_{1}, Q_{2}$
fixed and by moving $R_{i}$, ($1\leq i \leq g-5$), on the curve we
will get a cone over $C$ with vertex $O$, which  is of dimension
$2$.\\
In bielliptic case, $C^{1}_{g-1}$ has a component whose general element is the residual of the general
element of S with respect to the canonical series.
 It is not known, whether or not, the focal variety over this component is a hypersurface.
\end{example}
\begin{example}
Let $C$ be a trigonal curve of genus $g \ge 6$. In this case
$C^{1}_{g-1}$ has a component $S$, whose general member has the form
$D_{s}=P+Q+R+P_{1}+\cdots+P_{g-4}$, where $P+Q+R \in
\mathbf{g}^{1}_{3}$ and $P_{1}+\cdots+P_{g-4}$ are general points of
$C$. Let $\underline{r}$ denotes the line joining the points $P$,
$Q$ and $R$.
 The irreducible scheme $S$ has dimension $g-3$. Take $S$ as the
parameter space of the family of linear subspaces of
$\mathbf{P}^{g-1}$ spanned by divisors $D_{s}$, in which $D_{s}$ is
a general member of $S$. As in the previous example the expected
dimension is $g-2$. In this setting we have
\begin{center}
$F_{s}= \underline{r}\cup \overline{P_{1}Q_{1}} \cup\cdots \cup
\overline{P_{g-4}Q_{g-4}} $.
\end{center}
The nature of the points $Q_{i}$ is explained completely in
\cite{C-S 2}. For our aim it is enough just to know that they are
points on the line $\underline{r}$, which is the line passing
through points $P,Q,R$. It is easy to see that the focal variety is
of dimension at most $4$. In fact for a line $L$ generated by a
general member $P+Q+R$ of $\mathbf{g}^{1}_{3}$, set
$F_{L}=\bigcup_{P_{1},\ldots,P_{g-4}}F_{P_{1},\ldots,P_{g-4}}$,
where $F_{P_{1},\ldots,P_{g-4}}= \underline{r}\cup
\overline{P_{1}Q_{1}} \cup\cdots \cup \overline{P_{g-4}Q_{g-4}}$,
and $P_{1},\ldots,P_{g-4}$ moves on the curve. Consider that $F_{L}$
is contained in the joining variety of $C$ and the line $L$, so
$F_{L}$ has dimension at most $3$. By moving $L$ in
$\mathbf{g}^{1}_{3}$ we will get a $1$-dimensional
family of $F_{L}$'s, whose union, $F_{C}$, has dimension at most $4$.\\
Also in this case $C^{1}_{g-1}$ has another  component and again it
 is not known whether or not the focal variety on this component is a hypersurface.
\end{example}

\subsection{Second order Foci in genus 6 case}

We have seen that the family (\ref{firstfoci}) defines, for $s \in
S$, a closed subscheme $F_{s}\subset \Lambda_{s}$, whose points are
the so called first order foci at $s$. All these subschemes fit
together in a closed subscheme $\mathcal{F} \subset \Lambda$ and we
obtain a morphism $\pi_{1}:\mathcal{F}\rightarrow S$ and a diagram :

\begin{equation} \label{secodfoci}
\begin{array}{ccc}
  \mathcal{F} & \subset& S\times \mathbf{P}^{5}  \\
   \downarrow & \pi_{1} &   \\
   S&  &
\end{array}
\end{equation}

By Theorem $1$ (or by Theorem $8$), it follows that for all
sufficiently general $s\in S $ the fibre $ F_{s}=\pi_{1}^{-1}(s) $,
is a rational normal curve in $\Lambda_{s}$. For such an $s$, we can
introduce the second order foci of the family (\ref{firstfoci}),
defined as the first order foci of the family (\ref{secodfoci}) at
$s$. For more details about second order foci see \cite{C-S 3}. We
will denote by $D_{1}(\xi_{s})$, the second order foci of the family
 (\ref{firstfoci}) at $s$.

\begin{lem}
The second order foci in the genus $6$ case is a divisor of the form
$2E+F$ over the focal curve, where $E=p_{1}+\cdots+p_{5}$ and $F$ is
a divisor of degree $2$. In fact $F$ is the intersection locus of
the vertex $\Gamma$ with the rational normal curve $F_{s}$.
\end{lem}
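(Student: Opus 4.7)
The plan is to identify $D_1(\xi_s)$ as the degeneracy scheme on $F_s$ of the characteristic morphism associated to the family $\pi_1 : \mathcal{F} \to S$ at $s$, following the formalism of Section 2 of \cite{C-S 3}. By definition, the second order foci at $s$ form the vanishing divisor on $F_s$ of a natural morphism $\chi_s^{(2)}$ built from $T_s S$ and the normal bundle to $F_s$ in $S \times \mathbf{P}^{5}$. I would exploit the splitting of $T_s S$ induced by the Abel--Jacobi map $\alpha_{5}$: the $1$-dimensional subspace $\tau$ tangent to the fibre $\alpha_{5}^{-1}(L)$ corresponds to infinitesimal motion of $D_s$ inside the pencil $|L|$, while the complementary $2$-dimensional subspace represents infinitesimal variations of $L$ within $W^{1}_{5}$.

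The contribution of $F$ would come from the $\tau$-direction: when $s'$ varies along $\alpha_{5}^{-1}(L)$, the spaces $\Lambda_{s'}$ sweep out the quadric $Q$ of Lemma $2$, and all of them share the common vertex $\Gamma$. Hence every point of $\Gamma$ has zero first-order velocity in the $\tau$-direction, forcing the two points of $\Gamma \cap F_s$ to be contained in the zero scheme of $\chi_s^{(2)}$, which gives the degree-$2$ divisor $F$. The contribution of $2E$ would come from the fact that $C$ is contained in the focal scheme $\mathcal{F}$ and meets $\Lambda_s$ in $D_s = p_{1}+\cdots+p_{5}$, which is cut out on $F_s$. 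A local computation at each $p_i$, combining the Brill--Noether deformation of the pair $(L, D_s)$ with the local equation of $F_s \subset \Lambda_s$, is expected to show that $\chi_s^{(2)}$ vanishes to order exactly $2$ at $p_i$, yielding the divisor $2E$.

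To rule out further components of $D_1(\xi_s)$, I would perform a Chern-class computation on $F_s \cong \mathbf{P}^{1}$ using the normal bundle sequence
\[
0 \to N_{F_s/\Lambda_s} \to N_{F_s/\mathbf{P}^{5}} \to N_{\Lambda_s/\mathbf{P}^{5}}|_{F_s} \to 0,
\]
together with the known splittings $N_{F_s/\Lambda_s} = \mathcal{O}_{\mathbf{P}^{1}}(5)^{\oplus 2}$ and $N_{\Lambda_s/\mathbf{P}^{5}}|_{F_s} = \mathcal{O}_{\mathbf{P}^{1}}(3)^{\oplus 2}$; the total degree of the zero locus of $\chi_s^{(2)}$ should match $\deg(2E + F) = 12$, confirming that $2E + F$ is the full second order focal divisor. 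The main obstacle is the local computation producing multiplicity exactly $2$ at each $p_i$: this requires tracking the second-order variation of $F_s$ as $s$ moves, combining the first-order Brill--Noether deformation of $L$ with the motion of $D_s$ inside $|L|$, in order to pin down that the vanishing order at $p_i$ is precisely $2$ and not merely at least $1$.
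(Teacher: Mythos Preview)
Your framework and the identification of the relevant points are essentially the same as the paper's, and your target degree $12$ agrees with the paper's bound $\deg(D_1(\xi_s)) \le 12$ (which it quotes from \cite{C-S 3}). The place where you diverge is exactly the step you flag as the ``main obstacle'': you propose a direct local computation to show that the second-order characteristic map vanishes to order \emph{exactly} $2$ at each $p_i$. The paper bypasses this entirely. It observes that each $p_i$ lies on the $2$-dimensional subfamily $\{s' \in S : p_i \in \operatorname{supp} D_{s'}\}$, and since $D_{s'} \subset F_{s'}$, the point $p_i$ belongs to a $2$-dimensional family of focal curves $F_{s'}$ through $s$. Hence the characteristic map $\xi_s$ has a kernel of dimension at least $2$ at $p_i$, i.e.\ $p_i$ lies in the deeper degeneracy locus where the rank drops by two. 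Porteous' formula then gives that such a point contributes with multiplicity \emph{at least} $2$ to $D_1(\xi_s)$. Together with $p,q \in D_1(\xi_s)$ and the degree bound, the inequality $2\cdot 5 + 2 \ge 12$ forces all multiplicities to be sharp. So no delicate second-order local analysis is needed; the combinatorics of the degree bound does the work once you have multiplicity $\ge 2$.

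Two smaller remarks. First, your argument for the contribution of $F = p+q$ shows only that every $\Lambda_{s'}$ in the pencil contains $\Gamma$; for second order foci you need that $p$ and $q$ lie (to first order) on the nearby \emph{focal curves} $F_{s'}$, not merely on $\Lambda_{s'}$. The paper handles this ``by the same reason'' as for the $p_i$, i.e.\ via a one-parameter subfamily of focal curves through $p$ and $q$, so you should argue at the level of the $F_{s'}$ rather than stop at $\Gamma \subset \Lambda_{s'}$. Second, your Chern-class count is not quite the right one: for the map $T_sS \otimes \mathcal{O}_{F_s} \to N_{F_s/\mathbf{P}^5}$ of ranks $3$ and $4$, the Porteous class of the rank-$\le 2$ locus on a curve is a $c_2$ and hence vanishes. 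In the spirit of \cite{C-S 3} one works instead with the rank-$2$ kernel sheaf $K = \ker(\chi_s|_{F_s}) \subset \mathcal{O}_{F_s}^{\oplus 3}$ mapping to $N_{F_s/\Lambda_s}$; the determinantal degree is then $c_1(N_{F_s/\Lambda_s}) - c_1(K) = 10 - (-2) = 12$.
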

\begin{proof}{
By the same computation as in \cite{C-S 3} we have that
$\deg(D_{1}(\xi_{s})) \leq 12 $. The points $p_{1},\ldots, p_{5}$
 belong to $D_{1}(\xi_{s})$, because they belong to the two dimensional
family of linear spaces spanned by the divisors which contain the
point $p_{i}$ in their support. By the same reason the points $p$
and $q$, which   are the intersection points of $F_{s}$ with
$\Gamma$,
  belong to $D_{1}(\xi_{s})$. Now, as an application of Porteous
formula, it is easy to see that the points $p_{1},\ldots, p_{5}$
appear with multiplicity at least two. So they have multiplicity two
in $F_{s}$. Therefore $p$ and $q$ appear with multiplicity one and
we are done.}
\end{proof}
\begin{cor}
The canonical curve $k(C)$ can be constructed uniquely from the
family (\ref{firstfoci}). In fact it is a component of the closure
of the union of its second order foci.
\end{cor}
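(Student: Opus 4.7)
The plan is to derive the statement directly from Lemma 9, combined with a covering argument for the Abel--Jacobi map and a dimensional separation of the two pieces produced by Lemma 9.

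First, I would record the consequence of Lemma 9 that will do the heavy lifting: for general $s \in S$, the support of $D_1(\xi_s)$ contains $\mathrm{supp}(D_s) \subset C$. Indeed, $D_1(\xi_s)=2E+F$ with $E=p_1+\cdots+p_5 \subset C$ appearing with multiplicity $2$ and the residual divisor $F$ of degree $2$ lying on the vertex line $\Gamma_L$, where $L=\alpha_5(s)$.

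Next I would show the inclusion $C \subset \overline{\bigcup_{s \in S}\mathrm{supp}(D_1(\xi_s))}$ by a covering argument on the Abel--Jacobi map. For a general $L \in W^{1}_{5}$ the pencil $|L|$ is base-point-free (as already used in Theorem 1), so as $s$ varies in the fibre $\alpha_5^{-1}(L) \cong \mathbf{P}^{1}$ the supports $\mathrm{supp}(D_s)$ sweep out all of $C$. Hence $\bigcup_{s \in S}\mathrm{supp}(D_s)=C$, which together with the preceding paragraph gives the desired inclusion.

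Finally I would argue that $C$ is in fact an irreducible component, not merely a closed subset. Decompose the closure according to the two pieces of Lemma 9:
\[
\overline{\bigcup_{s \in S}\mathrm{supp}(D_1(\xi_s))} \;=\; C \;\cup\; V,
\]
where $V=\overline{\bigcup_{s \in S}\mathrm{supp}(F_s \cap \Gamma_L)} \subset \overline{\bigcup_{L \in W^1_5}\Gamma_L}$. Since $W^{1}_{5}$ is $2$-dimensional and each vertex $\Gamma_L$ is a line, the union of vertex lines has dimension at most $3$, but more importantly Lemma 2 tells us that for general $s$ the two points $F_s \cap \Gamma_L$ are disjoint from the five points $p_i \in \mathrm{supp}(D_s) \subset C$. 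Therefore a general point of $C$ lies in the $C$-piece and not in $V$, so $C$ has an open neighborhood in the closure that is $1$-dimensional and coincides with $C$. This exhibits $C$ as an irreducible component. The ``uniqueness of the construction'' then follows: starting from the family (\ref{firstfoci}), one forms the scheme $\mathcal{F}$, computes its first order foci (the second order foci of the original family) by Porteous, and extracts $C$ as the distinguished component whose multiplicity structure is $2E$ in Lemma 9.

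The main obstacle is the last step, namely ensuring that $C \not\subset V$. The most efficient way to handle this is via the explicit description in Lemma 2: the two intersection points of $F_s$ with $\Gamma_L$ form a divisor on $\Gamma_L$ that is genuinely different from the five $C$-points, so no specialization of the residual part absorbs a generic point of $C$. If one prefers a purely dimension-theoretic argument, one checks that $C$ has degree $10$ in $\mathbf{P}^{5}$ while $V$ lives on the vertex-ruled $3$-fold, and $C$ meets each $\Gamma_L$ only in finitely many points, so that $C \cap V$ is proper in $C$, forcing $C$ to be an isolated component of the closure.
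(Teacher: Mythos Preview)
The paper states this corollary without proof, as an immediate consequence of Lemma~6 (your ``Lemma~9''); your write-up is therefore a fleshing-out of what the paper leaves implicit, and it follows the intended route: use $D_1(\xi_s)=2E+F$, sweep the $E$-part over $S$ to get $C$, and separate off the $F$-part coming from the vertex lines.

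One logical slip is worth flagging. From ``for general $s$ the two points $F_s\cap\Gamma_L$ are disjoint from $p_1,\dots,p_5$'' you infer ``a general point of $C$ is not in $V$''. That inference is invalid: a general $p\in C$ equals some $p_i$ for \emph{some} $s$, and for that particular $s$ it is not among the two vertex points, but this does not rule out $p\in F_{s'}\cap\Gamma_{L'}$ for a \emph{different} $s'$, which is what membership in $V$ means. Your fallback arguments repair this. The cleanest fix is the one you mention last: for each general $s$ the divisor $D_1(\xi_s)$ carries an intrinsic decomposition by multiplicity, and the multiplicity-two part is exactly $E_s=\{p_1,\dots,p_5\}\subset C$; taking the closure of $\bigcup_s E_s$ recovers $C$ unambiguously, independently of whether $C$ happens to meet $V$. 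Alternatively, to salvage the set-theoretic ``component'' assertion you should argue directly that $C\not\subset\bigcup_{L}\Gamma_L$ for a general curve (e.g.\ a general point of a general canonical genus-$6$ curve does not lie on the vertex line of any $Q_L$), rather than deducing it from the disjointness at a single $s$.
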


During preparing this paper, C. Ciliberto and E. Sernesi pointed out
to me the paper \cite{C-S 4}, where similar and more general
computations of second order foci have been done.

\section{The genus 8 case}

\subsection{First order foci and its singular locus}

In the genus $8$ case take $S=C_{6}^{1}$, as parameter space, which
is a threefold; so we will have $3$-dimensional family of
$\Lambda_{s}$'s inside $\mathbf{P}^{7}$. For a general $s \in S$ the
morphism:
\begin{center}
$\Phi_{s}:T_{S,s} \bigotimes {O_{\Lambda_{s}}} \rightarrow
N_{\Lambda_{s}/ \mathbf{P}^{7}}. $
\end{center}
is a morphism between vector bundles of rank 3. Consider the first
order foci $F_{s}=\{x\in \Lambda_{s}| rk(\Phi_{s}(x))\leq2\}$. For a
fixed line bundle $L \in W^{1}_{6}$, the rational normal scroll
 $H_{L}=\cup_{D_{s}\in \alpha^{-1}_{6}(L)}\Lambda_{s} $
has a line $\Gamma$ as it's vertex. The vertex $\Gamma$ is contained
in $F_{s}$, because it is contained in  the $1$-parameter family of
$\Lambda_{s}$ parameterized by $\mathbf{P}^{1}\cong \alpha_{6}^{-1}(L)$.

\begin{thm}
For a sufficiently general $s\in S$, $\Phi_{s}$ is $1$-generic. In particular $F_{s}$ is an irreducible
cubic threefold.
\end{thm}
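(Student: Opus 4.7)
The plan is to identify $\Phi_s$ with a $3 \times 3$ matrix of linear forms on $\Lambda_s \cong \mathbf{P}^4$, verify $1$-genericity in Eisenbud's sense, and then invoke the general structure theorem for such matrices to deduce irreducibility of the cubic hypersurface $F_s$.

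The set-up is immediate: since $\Lambda_s \cong \mathbf{P}^4 \subset \mathbf{P}^7$ has codimension $3$, its normal bundle is $N_{\Lambda_s/\mathbf{P}^7} \cong O_{\Lambda_s}(1)^{\oplus 3}$, and $T_{S,s}$ is $3$-dimensional for general $s \in S = C^1_6$. After a choice of bases, $\Phi_s$ is represented by a $3\times 3$ matrix $M_s$ whose entries are linear forms on $\mathbf{P}^4$, and $F_s = V(\det M_s)$ is a cubic hypersurface.

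The heart of the argument is to prove that $M_s$ is $1$-generic: no non-trivial generalized row or column of $M_s$ has entries sharing a common linear factor. I would argue by contradiction. Failure of $1$-genericity produces a non-zero $v \in T_{S,s}$ for which $\Phi_s(v) \in H^0(\Lambda_s, N_{\Lambda_s/\mathbf{P}^7})$, viewed as a triple of linear forms on $\Lambda_s$, fits (after a change of basis on the normal bundle) inside a proper sub-bundle $O(1)^{\oplus 2} \subset O(1)^{\oplus 3}$. Geometrically this means the first-order deformation of $\Lambda_s$ along $v$ is confined to a fixed hyperplane $H \subset \mathbf{P}^7$ containing $\Lambda_s$. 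Integrating $v$ yields a $1$-parameter family $\{\Lambda_{s'}\}$ lying inside $H$. Either the family is contained in a fiber of $\alpha_6$, in which case $H$ must contain the entire rational normal scroll $H_L$, contradicting the non-degeneracy of $H_L$ in $\mathbf{P}^7$; or $\alpha_6$ is non-constant along it, in which case the divisors $D_{s'}$ of degree $6$ all embed in the canonical divisor $H \cap C$ of degree $2g-2 = 14$, producing a positive-dimensional subfamily that forces a point of $W^2_6$ on $C$. But $\rho(8,2,6) = -4 < 0$, so $W^2_6 = \emptyset$ on the general curve, giving the desired contradiction.

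Once $1$-genericity is established, Eisenbud's theorem on $1$-generic matrices of linear forms (applied analogously to the computations in \cite{C-S 3}) yields that the ideal of maximal minors is prime of the expected codimension. For our $3 \times 3$ matrix this ideal is just $(\det M_s)$, so $\det M_s$ is irreducible and $F_s \subset \Lambda_s$ is an irreducible cubic threefold, as claimed. The main obstacle is the Brill-Noether contradiction in the non-constant-$\alpha_6$ case: one must cleanly translate the infinitesimal confinement of the $\Lambda_{s'}$ into the existence of a $g^2_6$ on $C$. Should this turn out to be unwieldy, a semicontinuity fallback is available --- verify $1$-genericity on one explicit pair $(C_0, s_0)$ and use openness of the $1$-generic locus to propagate to a general $s$ on a general curve.
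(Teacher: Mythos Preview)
Your argument has a real gap at the integration step. From the failure of $1$-genericity you correctly extract a tangent vector $v \in T_{S,s}$ and a hyperplane $H \supset \Lambda_s$ such that the first-order deformation of $\Lambda_s$ along $v$ remains in $H$. But this is a purely infinitesimal statement and does not produce an honest $1$-parameter family $\{\Lambda_{s'}\} \subset H$: the locus $Z_H = \{s' \in S : \Lambda_{s'} \subset H\}$ may very well be a zero-dimensional (possibly non-reduced) scheme with $v$ lying in its Zariski tangent space at $s$, and nothing more. Without an actual curve in $S$, your dichotomy (constant versus varying $\alpha_6$) and the appeal to $W^2_6 = \varnothing$ never get started. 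Even granting the integration, the $W^2_6$ step is not clearly argued: a positive-dimensional family of degree-$6$ effective divisors all bounded by the fixed canonical divisor $H \cap C$ is already impossible for the elementary reason that a reduced divisor of degree $14$ has only finitely many sub-divisors, and it is not clear how you would instead force a $\mathbf{g}^2_6$. Your semicontinuity fallback is sound in principle but is not carried out.

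The paper's route is different and avoids any global family argument. Following Theorem~2 of \cite{C-S 3}, one identifies $\Phi_s$ with a matrix of linear forms governed by the series $|L^2|$, and $1$-genericity is then read off from the fact that, for general $C$ and $L \in W^1_6$, the series $|L^2|$ is a $\mathbf{g}^4_{12}$ (that is, a $\mathbf{g}^4_{g+4}$) not composed with an involution. This is a direct algebraic verification requiring no integration of tangent vectors. A small terminological point: your phrasing of $1$-genericity (``entries sharing a common linear factor'') is not the standard one---$1$-generic means that no generalized entry vanishes---though your subsequent interpretation (that $\Phi_s(v)$ lands in a corank-one direct summand of the normal bundle) is the correct one.
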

\begin{proof}{
By generality condition on the curve,
it follows that $|L^{2}|$ is a $\mathbf{g}^{4}_{g+4}$ and
 it is not composed with an involution.
Now the proof of this theorem is similar to the proof of theorem $2$
of \cite{C-S 3} to which we refer.
 }
\end{proof}

Now set $\Gamma_{s}=\left\{x\in \Lambda_{s}| rk(\Phi_{s}(x))\leq
1\right\}$. By corollary $3.3$ of \cite{E.}, we have
$\codim_{\Lambda_{s}}\Gamma_{s}\geq 3+3-3 = 3$, or equivalently
$\dim \Gamma_{s}\leq 4-3=1$. So there are two cases. First suppose
that $\dim\Gamma_{s} = 0$. In this case as an application of
Porteous formula we find that $\deg(\Gamma_{s})=6$. Set
$D_{s}=p_{1}+\cdots+p_{6}$. Since by the same reasons as in lemma
$6$, $p_{i}$ $(1\leq i \leq 6)$, belongs to $\Gamma_{s}$, we have
that $\Gamma_{s}=\{p_{1},\ldots,p_{6}\}$. Now suppose that
 $\dim(\Gamma_{s})=1$. In this case by theorem $5.1$ of \cite{E.},
$\Gamma_{s}$ is a rational normal curve and $\Phi_{s}$ is a
Catalecticant matrix, so $F_{s}$ is the secant variety of
$\Gamma_{s}$. In the second case consider the morphism
$$\Psi_{s}:T_{S,s} \otimes {O_{\Gamma_{s}}}
\rightarrow N_{\Gamma_{s}/ \mathbf{P}^{7}} $$ which is a morphism between
vector bundles of ranks 3 and 6, respectively. Consider the locus
where the rank of $\Psi_{s}$ drops twice, i.e. the set $H_{s}=\{x\in
\Gamma_{s}|rk(\Psi_{s}(x))\leq 1\}$. By the methods of the paper
\cite{C-S 3}, we find that $\deg(H_{s})=8$. Also, we know that
$p_{i}$ $(1\leq i \leq 6)$ belong to $H_{s}$. So we will have the
following corollary, whose proof we will complete in the next
subsection.

\begin{cor}
The canonical curve $k(C)$ can be constructed uniquely from a
family, analogous to the family (\ref{firstfoci}). Precisely in the
first case, $k(C)$ is the closure of union of the singular locus of
first order focis. In the second case the canonical curve $k(C)$ is
a component of the closure of the union of those loci, where
$\Psi_{s}$'s, drops rank twice.
\end{cor}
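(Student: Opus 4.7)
The plan is to deduce both statements of the corollary by locating the six support points $p_{1},\ldots,p_{6}$ of a general divisor $D_{s}=p_{1}+\cdots+p_{6}\in S$ inside the distinguished subscheme of $\Lambda_{s}$ at hand---the singular locus of $F_{s}$ in Case $1$ and the locus $H_{s}$ in Case $2$---and then letting $s$ vary over $S$. In both cases the $p_{i}$'s lie on the canonical image $k(C)\subset\mathbf{P}^{7}$, so the union over $s$ of these distinguished subschemes lies inside $k(C)$ (up to possible extra components in Case $2$), while a Brill--Noether density argument gives the reverse inclusion: for a general curve of genus $8$, every $q\in C$ appears in the support of some $D_{s}\in C^{1}_{6}$, because $\dim S=3$ and the natural evaluation morphism $S\to C$ is dominant.

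For Case $1$, the Porteous computation noted above shows that $\Gamma_{s}$ is zero-dimensional of length $6$; as already observed in the paragraph preceding the corollary, the six distinct points $p_{1},\ldots,p_{6}$ are contained in $\Gamma_{s}$, so length comparison forces $\Gamma_{s}=\{p_{1},\ldots,p_{6}\}$ scheme-theoretically. Since $\Phi_{s}$ is $1$-generic by Theorem $8$, the cubic threefold $F_{s}$ is a standard determinantal hypersurface whose singular locus coincides with its rank-$\leq 1$ stratum $\Gamma_{s}$; hence $\mathrm{Sing}(F_{s})=\{p_{1},\ldots,p_{6}\}\subset k(C)$. Varying $s$ in $S$, taking closures and invoking the Brill--Noether density, we obtain $k(C)=\overline{\bigcup_{s\in S}\mathrm{Sing}(F_{s})}$.

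For Case $2$ the argument is parallel: the length-$8$ subscheme $H_{s}\subset\Gamma_{s}$ contains $\{p_{1},\ldots,p_{6}\}$ by the observation in the text, so as $s$ varies in $S$ the points of $C$ sweep out a one-dimensional subvariety of $\overline{\bigcup_{s}H_{s}}$, which is all of $k(C)$ by the same density argument. The two residual points $H_{s}\setminus\{p_{1},\ldots,p_{6}\}$ may trace out further components, but this is compatible with the weaker statement that $k(C)$ is \emph{a} component. The main technical obstacle is the scheme-theoretic identification $\mathrm{Sing}(F_{s})=\Gamma_{s}$ in Case $1$, which depends on the $1$-genericity of $\Phi_{s}$ (Theorem $8$) together with the determinantal structure results of \cite{E.} already invoked; once this is in place, the Brill--Noether spreading argument to recover all of $k(C)$ is routine.
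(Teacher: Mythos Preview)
Your argument in Case~1 is essentially the paper's: the identification $\Gamma_{s}=\{p_{1},\ldots,p_{6}\}$ is carried out in the paragraph preceding the corollary, and from there one concludes $k(C)=\overline{\cup_{s}\Gamma_{s}}$ directly. Your Brill--Noether density remark (every $q\in C$ lies in the support of some $D_{s}\in C^{1}_{6}$) makes explicit a step the paper leaves tacit; note, though, that there is no ``natural evaluation morphism $S\to C$'' --- what you mean is that the incidence correspondence in $S\times C$ dominates $C$, which is clear since any base-point-free $g^{1}_{6}$ already hits every point of $C$.

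In Case~2 there is a genuine gap. You assume from the outset that $H_{s}$ is a zero-dimensional subscheme of length~$8$, but the Porteous count $\deg(H_{s})=8$ is only meaningful when $H_{s}$ has the expected codimension in $\Gamma_{s}$. Nothing you have said excludes the possibility that $\Psi_{s}$ has rank $\le 1$ at \emph{every} point of $\Gamma_{s}$; in that event $H_{s}=\Gamma_{s}$ is the whole rational normal quartic, and $\overline{\cup_{s}H_{s}}$ coincides with the variety $F=\overline{\cup_{s}\Gamma_{s}}$, inside which $k(C)$ sits only as a subvariety, not a component. This is precisely why the paper defers the completion of the proof to the next subsection: there it builds, for each $L\in W^{1}_{6}$, the focal surface $F_{L}=\overline{\cup_{s\in\alpha_{6}^{-1}(L)}\Gamma_{s}}$, identifies it (via an adjoint-morphism construction and Lemma~10) with a rational surface, and then argues that distinct $L$ give distinct $F_{L}$ because a general genus-$8$ curve carries no $g^{2}_{6}$. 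This forces $\dim F\ge 3$, and since $\dim\Gamma_{s}=1$ this is incompatible with $\Psi_{s}$ having rank $\le 1$ everywhere (the tangent space to $F$ at a general point of $\Gamma_{s}$ would then have dimension at most~$2$). Only after this does one know $H_{s}\subsetneq\Gamma_{s}$, hence $H_{s}$ is zero-dimensional and the length-$8$ computation applies. Your proposal skips this entire step, so the Case~2 argument does not close as written; the focal-surface material in Subsection~3.2 is not decoration but the substance of the proof in this case.
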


\subsection{Focal surfaces}

For a canonical curve of genus $8$, assume that we are in the second
case and for a general line bundle $L\in W^{1}_{6}$ denote by
$\textbf{U}\subset \alpha_{6}^{-1}(L)$ the open subset such that
$\Gamma_{s}$ is a rational normal curve. Define:
$$F_{L}=\overline{\cup_{s\in \textbf{U}}\Gamma_{s}}$$
$F_{L}$ is an irreducible surface. By generality assumption the
morphism $\phi_{L^{2}}: C \rightarrow \mathbf{P}^{4}$ maps $C$ birationally
onto a curve of degree $12$. By base point free pencil trick and
generality condition on the curve $C$, the multiplication map
$$\overline{\mu}_{L}:H^{0}(C,L)\otimes H^{0}(C,L) \rightarrow
H^{0}(C,L^{2})$$
is an injective map, and so the projectivized of dual of it's image
is a point $p$ in $\mathbf{P}^{4}$ which can be realized as the vertex of a
quadric hypersurface $Q$.
 Note that $\phi_{L^{2}}(C)$ is contained in $Q$, which is a cone over a surface $V$ isomorphic to
$\mathbf{P}^{1}\times \mathbf{P}^{1}$, which has the point $p$ as it's vertex. Now by
injectivity of $\overline{\mu}_{L}$, the composition of the morphism
$\phi_{L^{2}}$ with the projection $\pi_{p}$ from the point $p$
coincides with the composition of $\phi_{L} \otimes \phi_{L}: C
\rightarrow \mathbf{P}^{1}\times \mathbf{P}^{1}$ and the Segre embedding $\nu :
\mathbf{P}^{1}\times \mathbf{P}^{1} \rightarrow \mathbf{P}^{3}$. Consider also that,
$\pi_{p}(\phi_{L^{2}})(C)$ is contained in $V$ whose generating
lines cut on $C$ the pencil $L$.
Let $\sigma : U\rightarrow V$ be the blow up of $V$ at the conductor
ideal of $\pi_{p}(\phi_{L^{2}})(C)$, and $C'\subset U$ the proper
transform of $\pi_{p}(\phi_{L^{2}})(C)$. Then the adjoint morphism
$\phi_{K_{U}+C'} : U\rightarrow \mathbf{P}^{7}$ maps $U$ onto a surface
containing $k(C)$ which contains a $1$-parameter family of rational
normal curves $L_{s}$ of degree $4$ which cut the vertex $\Gamma$ in
two points. Now, by the following lemma it follows that these curves
are the curves $\Gamma_{s}$ and $F_{L}$ is birational with
$\mathbf{P}^{1}\times \mathbf{P}^{1} $.

\begin{lem}
Let $R$ and $R'$ be rational curves in $\mathbf{P}^{4}$, which have $6$
points in common and cut a line $L$ at least in two points. Then
$R=R'$.
\end{lem}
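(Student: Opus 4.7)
The lemma is invoked with $R$ and $R'$ being the rational normal quartics $\Gamma_s$ and $L_s$ inside $\Lambda_s\cong\mathbf{P}^4$, so I will assume $\deg R=\deg R'=4$ throughout the plan. The idea is to project from $L$ and reduce the question to a B\'ezout count for conics in the plane.

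Consider the linear projection $\pi_L\colon\mathbf{P}^4\dashrightarrow\mathbf{P}^2$ from the line $L$. Since each of $R,R'$ has degree $4$ and meets $L$ in (at least) two points, the images $\pi_L(R)$ and $\pi_L(R')$ are plane curves of degree $4-2=2$, i.e., conics. The six common points of $R\cap R'$ lie generically off $L$, so they project to six common points of these two conics. Two distinct plane conics meet in at most four points, so necessarily $\pi_L(R)=\pi_L(R')=:\gamma$. Consequently both $R$ and $R'$ lie on the quadric threefold $V=\overline{\pi_L^{-1}(\gamma)}\subset\mathbf{P}^4$, the cone over $\gamma$ with vertex $L$, which is ruled by the pencil of $2$-planes $\Pi_t$, $t\in\gamma$, containing $L$.

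To conclude $R=R'$ inside $V$, I would analyse the induced degree-$2$ covers $\mu\colon R\to\gamma$ and $\mu'\colon R'\to\gamma$ (degree $2$ because a hyperplane through $L$ cuts $R$ in four points, two of them on $L$). Each common point $p_i\in R\cap R'$, with $t_i:=\pi_L(p_i)$, is a genuine coincidence in the fibre $\mu^{-1}(t_i)\cap{\mu'}^{-1}(t_i)\subset\Pi_{t_i}$. I would then pass to the fibre product $B=\widetilde R\times_\gamma \widetilde R'\subset\mathbf{P}^1\times\mathbf{P}^1$, a bidegree-$(2,2)$ curve of arithmetic genus $1$, and encode the ``separation'' of the two inclusions $R\hookrightarrow V$ and $R'\hookrightarrow V$ as a section of a suitable small-degree line bundle on $B$. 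The six coincidences then appear as six zeros of this section, which should force it to vanish identically, yielding $R=R'$.

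This last numerical step is the main obstacle; everything before it is essentially routine projection geometry. A useful short-circuit handles one special subcase: if the two points of $R\cap L$ coincide with those of $R'\cap L$, then $R$ and $R'$ share $6+2=8$ points in total, and since a rational normal quartic in $\mathbf{P}^4$ is determined by $7$ points in general position, $R=R'$ follows at once. The substantive case is therefore that of four distinct points on $L$, where the fibre-product degree count described above must be carried out carefully.
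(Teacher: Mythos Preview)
Your opening move---projecting from $L$ to $\mathbf{P}^2$ and using B\'ezout on the image conics to force $\pi_L(R)=\pi_L(R')$---is exactly what the paper does. The divergence, and the genuine gap, comes afterward: you yourself flag the fibre-product step as ``the main obstacle,'' and indeed the construction of a line bundle on the $(2,2)$-curve $B$ whose section measures the separation of $R$ from $R'$ inside the cone, together with the degree bound needed for six zeros to kill it, is never carried out. The special-case short-circuit (when $R\cap L=R'\cap L$, giving eight shared points) is fine but leaves the main case open, so as it stands the argument is incomplete.

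The paper closes the gap more cheaply and avoids the fibre product entirely. Once $\pi_L(R)=\pi_L(R')=\gamma$, every $t\in\gamma$ gives a plane $\Pi_t\supset L$ meeting each of $R,R'$ in at least one further point beyond $L$; thus each curve meets $\Pi_t$ in at least three points. At this stage the paper simply invokes Lemma~4.3 of Ciliberto--Sernesi (\emph{J.\ Algebra} \textbf{171} (1995), 867--893), which handles exactly this configuration of two rational normal curves with the given number of common points and each meeting a fixed \emph{plane} in at least three points. In other words, your conic argument already upgrades the shared line (two incidences) to a shared plane (three incidences), and that is precisely the hypothesis of an existing lemma---no new computation on $B$ is required.
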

\begin{proof}{
Let $L$ be the line which $R$ and $R'$ cut it in two points. By
projecting from $L$ to $\mathbf{P}^{2}$, we get two conics $\pi(R)$ and
$\pi(R')$ which cut each other in $6$ points. This is possible only
when $\pi(R)=\pi(R')$. But this equality means that there are
infinitely many planes $\Lambda$ which cut the rational curves $R$,
$R'$ simultaneously and contain the line $L$. Take one such a plane.
The rational curves $R$ and $R'$ have $6$ points in common and cut
the plane at least in three points. Now the lemma follows by lemma
$(4.3)$ of \cite{C-S 2}.
  }
\end{proof}

By a same method in \cite{C-S 3}, it is easy to see that the variety
$F=\cup_{s\in S}\Gamma_{s}$ should be of dimension at least $3$,
otherwise, for any line bundle $L'$, we have $F_{L}=F_{L'}$ and it
follows that the canonical curve $k(C)$ has a $\mathbf{g}_{6}^{r}$, with
$r\geq 2$, which is impossible by generality of curve. Now, the
corollary $9$ is valid. Indeed, for a general member $s$ of $S$,
$k(C)= \overline{\cup_{s\in S}\Gamma_s}$ when $\dim(\Gamma_s)=0$. Then assume that $\dim(\Gamma_s)=1$. Notice
that $\Psi_s$ does not vanishes identically since $\dim F \geq3$.
 Therefore, for a variety $H$,
$$ \overline{\cup_{s\in S}H_s}=k(C)\cup H. $$
This shows that $k(C)$ is a component of the closure of the union of those locus where $\Psi_s$ drops rank twice.

\textbf{Acknowledgment}

I am grateful to Prof. E. Sernesi, who taught and introduced me to
this topic. I benefited from warmly and energetic discussions which
I had with him, when I was  an Erasmus visitor in Rome Tre
university. I am also thankful for Prof.   C. Ciliberto, and A.
Bruno for having friendly and useful
 discussions, about this subject and many other problems, during the period.

\end{document}